\newtheorem{thm}{Theorem}
\newtheorem*{thm*}{Implicit Function Theorem}
\newtheorem{cor}[thm]{Corollary}
\theoremstyle{definition}
\newtheorem{defn}{Definition}[section]
\theoremstyle{remark}
\def\R{\mathbb R}
\def\N{\mathbb N}
\title{Monochromatic metrics are generalized Berwald} 
\author{Nina Bartelmeß\thanks{Supported by DFG via Research Training Group 1523/2}$~$ and Vladimir S. Matveev}
  \date{}
\begin{document}
\maketitle

\abstract{We show that monochromatic Finsler metrics, i.e.,  Finsler metrics such that each two tangent spaces are isomorphic as normed spaces, are generalized Berwald metrics,  i.e., there exists an affine connection, possibly with torsion, that  preserves  the Finsler function.}

\tableofcontents


\section{Introduction}
\subsection{Definitions and main result}
A \textit{Finsler metric} on a smooth manifold $M$ of dimension $n \geq 2$ is a  function $F \colon TM \to [0,\infty)$ such that for every point $x \in M$ the restriction $F_x = F\vert_{T_x M}$ is a Minkowski norm. That means that 
\begin{enumerate}
   \item $F_x(\lambda \xi)=\lambda F_x(\xi)$ for all $\lambda \ge 0, \xi \in T_xM$
   \item $F_x(\xi+\eta)\le F_x(\xi)+ F_x(\eta)$ for all $\xi,\eta \in T_xM$
   \item $F_x(\xi)=0 \Rightarrow  \xi=0$
\end{enumerate}
We allow irreversibility and do not require   strict convexity. We assume that $F$ is smooth on the slit tangent bundle $TM\setminus \{0\}$. 

\begin{defn}
A Finsler metric $F$ on a connected manifold $M$ is said to be a \textit{generalized Berwald metric} if there exists a smooth affine connection $\nabla$ on $M$, called \textit{associated connection}, whose parallel transport preserves the Finsler function $F$. 
\end{defn}

That is, for every $x,y \in M$ and for any curve $\gamma \colon [0,1]\to M$ with $\gamma(0)=x$ and $\gamma(1)=y$ the parallel transport $P^\nabla_{\gamma}\colon T_x M \to T_y M$ along this curve is an isomorphism of  the normed spaces $(T_xM, F_x)$  and  $(T_yM, F_y)$ in the sense 
\begin{align*}
F_y(P^\nabla_{\gamma}(\xi))= F_x(\xi) \text{ for all } \xi \in T_x M.
\end{align*}
In the definition above, the connection may have a torsion. 

\begin{defn}  \label{Defmono}
A Finsler metric $F$ is called \textit{monochromatic} if for every two points $x,y \in M$  there exists a linear isomorphism between the tangent spaces at these points which is an isometry with respect to $F_x$ and $F_y$.
\end{defn}
 Clearly, as one of these points one can take some fixed point $x_0$, so monochromacy of a Finsler metric is equivalent to the existence of a field of linear isomorphisms  $A_x := A(x) \colon T_{x_0}M \to T_xM$ such that 
\begin{align*}
F(x,A_x(\xi)) = F(x_0,\xi) \text{ for all } \xi \in T_{x_0}M.
\end{align*}
We do not assume that  $A_x$ depends smoothly or even continuously on $x$.
This definition  is due to  David Bao \cite{Bao07}  and is motivated by a suggestion of Zhongmin Shen, who proposed to assign a unique color to each Minkowski norm. Generic Finsler spaces are then ``multicolored''  because different points of the manifold will generically correspond to different colors. Monochromatic manifolds are such that all points correspond to the same color. Our main result is: 

\begin{thm}\label{ThmMainResult}
Let $(M,F)$ be a connected Finsler manifold. Then,  $F$ is a generalized Berwald metric if and only if $F$ is monochromatic.
\end{thm}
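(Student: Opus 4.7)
The proof naturally splits into two implications. The forward direction (generalized Berwald $\Rightarrow$ monochromatic) is immediate: using connectedness, join any $x\in M$ to a fixed reference point $x_0$ by a curve $\gamma$, and set $A_x:=P^\nabla_\gamma$; this is a linear isometry $T_{x_0}M\to T_xM$ by the defining property of the associated connection. For the converse, the strategy I would adopt is to realise the monochromatic structure as a reduction of the linear frame bundle of $M$ and then produce $\nabla$ as (the extension of) any principal connection on that reduction.

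Concretely, fix $x_0\in M$, identify $T_{x_0}M\cong \R^n$ by a choice of basis, and let
\[
G := \{A \in \mathrm{GL}(n,\R) : F_{x_0}\circ A = F_{x_0} \}
\]
be the linear isometry group of $F_{x_0}$. Since the unit ball $\{F_{x_0}\le 1\}$ is a bounded convex body, $G$ is a bounded closed subgroup of $\mathrm{GL}(n,\R)$, hence compact; by Cartan's closed subgroup theorem it is a Lie subgroup. Inside the frame bundle $LM$ consider
\[
P := \bigl\{ f\in LM : f\colon \R^n \to T_xM \text{ is a linear isometry of } F_{x_0} \text{ and } F_x \bigr\},
\]
which carries a fibrewise free right action of $G$ by precomposition. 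Monochromacy is precisely the assertion that $P\to M$ is surjective. If $P\to M$ can be shown to be a \emph{smooth} principal $G$-bundle, then any principal $G$-connection on $P$ (which exists by a standard partition-of-unity construction) extends canonically through $G\hookrightarrow \mathrm{GL}(n,\R)$ to a principal $\mathrm{GL}(n,\R)$-connection on $LM$, i.e.\ to an affine connection $\nabla$ on $M$. By construction $\nabla$-parallel transport preserves $P$ --- it sends isometric frames to isometric frames --- and therefore preserves $F$; this exhibits the desired associated connection.

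The main obstacle, and the step requiring real work, is to promote the pointwise, possibly discontinuous isometries provided by Definition~\ref{Defmono} to a smooth principal bundle structure on $P$. I would argue locally: on a coordinate neighbourhood $U\subset M$ with a smooth frame $e_1,\dots,e_n$, reading the Minkowski norms in this frame yields a smooth family $\tilde F_y$ of norms on $\R^n$, and monochromacy forces every $\tilde F_y$ to lie in the single $\mathrm{GL}(n,\R)$-orbit $\mathcal{O}$ of $\tilde F_{x_0}$ under the action $(A,F)\mapsto F\circ A^{-1}$. Since $G$ is a (compact) Lie subgroup, $\mathcal{O}\cong \mathrm{GL}(n,\R)/G$ is a smooth homogeneous space and $\mathrm{GL}(n,\R)\to \mathrm{GL}(n,\R)/G$ a smooth principal $G$-bundle admitting smooth local sections. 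The key claim to establish is that $y\mapsto \tilde F_y$ is smooth as a map into the finite-dimensional manifold $\mathcal{O}$; granted this, pulling back a smooth local section produces a smooth $y\mapsto A(y)\in \mathrm{GL}(n,\R)$ with $\tilde F_y\circ A(y) = \tilde F_{x_0}$, hence a smooth local section of $P\to U$, giving $P$ the desired smooth principal bundle structure.

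Once the smoothness of the reduction is in place, the remainder of the argument is formal. I expect the only delicate point to be precisely the passage from smoothness of $y\mapsto \tilde F_y$ in the (infinite-dimensional) space of Minkowski norms to smoothness as a map into the finite-dimensional orbit $\mathrm{GL}(n,\R)/G$. This should follow from a local submersion / implicit function argument applied to the smooth orbit map $\mathrm{GL}(n,\R)\to C^\infty(\R^n\setminus\{0\})$, $A\mapsto \tilde F_{x_0}\circ A^{-1}$, which factors as an injective immersion of $\mathrm{GL}(n,\R)/G$ into the space of norms and thereby links the finite-dimensional smooth structure on $\mathcal{O}$ to the smoothness of $F$ in the base variable.
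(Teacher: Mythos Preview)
Your strategy is correct and is a conceptual repackaging of what the paper does. Both arguments reduce to producing, near each point $p$, a smooth family of linear isometries $(T_pM,F_p)\to(T_xM,F_x)$; after that, building the connection (locally via $\Gamma_i=-(\partial_iB)B^{-1}$ in the paper, or as a principal connection on the reduction in your language) and globalising by partition of unity is routine, and the two versions are interchangeable. The paper carries out the local step concretely: it uses the Binet--Legendre metric to pass to an orthonormal frame so that the unknown isometry can be sought in the compact group $SO(n)$, chooses test vectors $\xi_1,\dots,\xi_m$ so that $B\mapsto(F_p(B\xi_i))_i$ has maximal rank at $B=\mathrm{Id}$, and solves $F(x,B_x\xi_i)=F(p,\xi_i)$ for a smooth $B_x\in SO(n)$ by the Implicit Function Theorem.

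The point you flag as delicate is indeed the crux, and your sketch does not quite close it. That $\mathrm{GL}(n,\R)/G\hookrightarrow C^\infty(\R^n\setminus\{0\})$ is an \emph{injective immersion} is not sufficient to conclude that a smooth map into the ambient space with image in the orbit is smooth into the quotient; for that the orbit must be \emph{embedded} (initial). In the paper this is exactly the issue that the IFT-produced $B_x$ a priori lies only in $U'=\{B\in SO(n):F_p(B\xi_i)=F_p(\xi_i),\ i\le m\}$, not in the isometry group $U\subseteq U'$. The paper shows $U'\setminus U$ is closed (hence compact in $SO(n)$), bounds a continuous functional $\phi$ away from zero there, and then uses the hypothesised, possibly discontinuous, monochromacy isometry $A_x$ to rewrite $\phi(A_x^{-1}B_x)$ as a quantity continuous in $x$ and vanishing at $x=p$, forcing $A_x^{-1}B_x\in U$ for $x$ near $p$. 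In your framework the clean substitute is to observe that the orbit map is proper (since $G$ is compact, unbounded representatives $A$ send the unit ball to bodies escaping every compactum), hence an embedding; but some such argument must be supplied, and the phrase ``injective immersion'' alone does not carry it.
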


In Theorem \ref{ThmMainResult} we assume that $M$ is at least of class $C^k$ and $F$ is at least of class $C^{k-1}$, $k \geq 2$. From the proof it will be clear that the associated connection will be at least of class $C^{k-2}$.
At the end of the paper, in \S \ref{dim3}, we discuss  the existence of non-Riemannian generalized Berwald metrics on closed manifolds of small dimensions.


\subsection{History and motivation}\label{subsecHistory}

 Definitions obviously similar to the definition of monochromatic metrics appeared many times in the literature, possibly first time in a commentary of Hermann Weyl on Riemann's habilitation address \cite{Riemann2013}. There, Weyl  suggested to consider Finsler manifolds such that all tangent spaces are isomorphic as normed spaces. It is not clear though whether he assumed that the field of isomorphisms $A_x$ in definition \ref{Defmono} depends smoothly on $x$.

In 1965 Detlef Laugwitz \cite{Laugwitz65} referred to Weyl's idea and suggested the following definition: 
he called a Finsler metric \textit{metrically homogeneous}  if, in our terminology, it is monochromatic and if in addition the field of linear isomorphisms $A_x$ in our definition of a monochromatic metric depends smoothly on $x$.  
An equivalent definition was given  by Yoshimiro Ichijyo \cite{Ichijyo76}  who called such Finsler metrics \textit{Finsler metrics modeled on a Minkowski space.}  Other equivalent definitions exist in the literature; such Finsler metrics were called \textit{1-form metrics} in e.g. \cite{Shimada} and   \textit{affine deformations of Minkowski spaces} in e.g. \cite{Tamassy}. 

It is easy to show and was independently done in \cite[Exercise 15.4.1]{Laugwitz65}, \cite[Theorem 2]{Ichijyo76} and, quite recently, in \cite[Theorem 1]{Tamassy}, that (locally) such metrics are generalized Berwald metrics. We essentially repeat their  proof at the end  of the proof of Theorem \ref{ThmMainResult}.

Many special cases of Theorem \ref{ThmMainResult} were proved before. For example,   \cite{Aradi} proved  it for left-invariant Finsler metrics  and \cite{Tayebi} proves  it for $(\alpha, \beta)$-metrics such that the $\alpha$-norm of $\beta$ is constant (in this paper it  is  also assumed that the metric should satisfy the so called sign property, but actually by Theorem \ref{ThmMainResult} this additional assumption can be omitted), see also \cite{Vincze1}. 
    
Recent interest to generalized Berwald spaces and monochromatic metrics is due to their relation to the Landsberg unicorn problem, see e.g. \cite{Vincze, Xu}, and  because for these metrics one obtains relatively simple formulas for different curvature-type invariants, see e.g. \cite{Aikou,Minguzzi,Libing,Shimada}. 
    
Recently, a 2-dimensional version of our Theorem \ref{ThmMainResult} was independently proved and applied  in \cite{ivanov}.

\subsection*{Acknowledgements} 
We thank S. Ivanov, C. Vincze and  J. Szilasi  for useful discussions. 


\section{Proof of Theorem \ref{ThmMainResult}}

The direction ``$\Rightarrow$'' is easy and was done many times before, in particular in the above mentioned  \cite{Ichijyo76,Laugwitz65}. Indeed, 
if  $F$ is a generalized Berwald metric on a connected manifold, then for every two points $x,y \in M$ the parallel transport $P^\nabla$ along any curve connecting $x$ and
$y$  gives us an isomorphism of $(T_xM, F_x)$ and $(T_yM, F_y)$.

The proof  in the direction "$\Leftarrow$" goes as follows: we first  show that, though we do not require \textit{a priori} that $A_x$ depends smoothly on $x$, one  can locally choose it such that it depends smoothly on $x$. As we explained before, the  case when $A_x$ depends smoothly on $x$ was solved before by Laugwitz \cite{Laugwitz65} and Ichiyjo \cite{Ichijyo76}. To make this work self-contained we repeat their proof. The last step is the transition from local to global, it uses a standard trick using the partition of unity argument.


\subsection{Locally, one can choose $A_x$ such that it depends smoothly on $x$.} 

Let $F$ be a smooth Finsler metric. We assume that it is not a Riemannian metric; the case of Riemannian metrics is trivial since they are automatically Berwald. We will work in a sufficiently small neighborhood of a point $p$, let $x=(x_1,...,x_n)$ be a local coordinate system in this neighborhood. 

We consider the Riemannian Binet-Legendre metric $g=g_F$ corresponding to $F$. The definition and properties of $g_F$ are  in \cite{Matveev1}. We will work in a local orthonormal frame   $e_1(x),..., e_n(x)$ with respect to this metric (i.e., $g_F(e_i, e_j)= \delta_{ij})$. The coordinates of tangent vectors will be coordinates in this frame.  The local existence of such an  orthonormal frame is known and  immediately follows from the  Gram-Schmidt orthogonalization process.

Next, we consider the Minkowski norm $F_p$ on $T_pM$, $m\in \mathbb{N}$ and vectors $ \xi_1,..., \xi_m\in T_pM$  such that the following conditions hold:  
\begin{itemize}
	\item [(I)] The differentials of the functions  $\psi_1,...,\psi_m \colon SO(n) \to  \R$,  $\psi_i(B)= F_p(B \xi_i)$  are linearly independent at $B= \textrm{Id}$. 
	\item  [(II)] The number $m$ is a maximal  number with the above property. 
\end{itemize} 
The notation $B \xi_i $ simply means the multiplication of a matrix $B\in SO(n) $ with the vector $ (  \xi_i^1,...,   \xi_i^n) \in \mathbb{R}^n$, where $  \xi_i^1,...,   \xi_i^n$ are coordinates of $ \xi_i$ in the orthonormal frame  $e_1(p),..., e_n(p)$. The resulting element of $\mathbb{R}^n$ will be identified with a  vector  of $T_pM$ (later also with a vector of $T_xM$)   via the basis $e_1(p),...,e_n(p)$ (later, via the basis $e_1(x),...,e_n(x)$). 

The existence of  such a number $m$ and the vectors $  \xi_1,...,   \xi_m$ is trivial. Indeed, $m$ such that (I) holds is bounded from above by $\frac{n(n+1)}{2}$ and because our Finsler metric is not a Riemannian metric the function $ B\mapsto  F_p(B  \xi )$ is not a constant (for each fixed $\xi \ne 0$) which implies the existence of at least one $\xi$ with property (I).  

It is  clear that the number $\frac{n(n+1)}{2}- m= \textrm{dim} \left(SO(n)\right)- m$ is the dimension of the group of endomorphisms  of $T_pM$ preserving $F_p$.  Let us  consider a local coordinate system $B=(b_1,..., b_{\frac{n(n+1)}2})$ on $SO(n)$ in a small neighborhood of the neutral element $\textrm{Id}$. In this  coordinates, $d\psi_i$ is simply the $\frac{n(n+1)}{2}$-tuple $\left(\tfrac{\partial \psi_i}{\partial b_1}, ..., \tfrac{\partial \psi_i}{\partial b_{\frac{n(n+1)}{2}}}\right)$ and the condition that the differentials  of $\psi_i$ are linearly independent means that the matrix 
\begin{equation} \label{1}
\left ( \frac{\partial \psi_i}{\partial b_j} \right)_{i=1,...,m; j= 1,..., \frac{n(n+1)}2}
\end{equation}
has rank $m$. Without loss of generality we will assume that the coordinates $b_1,...,b_{\frac{n(n+1)}{2}} $
are chosen such that the last $m$ columns of the matrix \eqref{1} form a nondegenerate matrix. 

\vspace{1ex} 
Let us now show that, possibly in a smaller neighborhood of $p$, one can choose the field $A_x$ in definition \ref{Defmono}  such that it depends smoothly on $x$. 
We  will use the Implicit Function Theorem; though it is  well known, we formulate it below  in order to fix the terminology.
\begin{thm*} Consider $W\subseteq \R^{k+m}$ with coordinates $(X,Y)= (X_1,...,X_k, Y_1,...,Y_m)$.  
Let $\Phi \colon \R^{k+m} \to \R^m$ be a continuously differentiable mapping  and $(\hat  X,\hat  Y) \in \R^{k+m}$ a point with $\Phi(\hat  X,\hat Y)=c$, where $c\in \R^m$. If  the $m\times m$-matrix 
\begin{align*}
 \left(\left(\frac{\partial \Phi_i}{\partial y_j}\right)_{i,j=1,...,m}\right)_{(\hat X, \hat Y)} \neq 0
\end{align*}
is nondegenerate, then there exists a non-empty open set $U\subset  \R^k$ containing $\hat X$, an open set $V\subset \R^m$ containing $\hat Y$ and a unique continuously differentiable mapping $g \colon U \to V$ such that $\Phi(X,g(X))=c$ for all $X\in U$ and such that for all points $(X,Y)\in U\times V $ with $\Phi(X,Y) =c$ we have that $Y= g(X)$.
Moreover, if $\Phi$ is of class $C^\ell$, $\ell\ge 1$, then $g$ is also of class $C^\ell$. \end{thm*}

Let us now apply this theorem to our situation. Set $k=n + \left(  \frac{n(n+1)}2- m \right)$.   The first $n$ coordinates of $X=(X_1,...,X_k)$ will be denoted by $x=(x_1,...,x_n)$, one should think about them as about local coordinates in a neighborhood of $p$. The remaining $\left(  \frac{n(n+1)}2- m \right)$ coordinates of $X=(X_1,...,X_k)$ will be denoted by $B_1=\left(b_1,...,b_{\frac{n(n+1)}2- m}\right)$, one can think about them as about the first portion of the local  coordinates on $SO(n)$ as discussed above. The coordinates $y=(y_1,...,y_m)=(B_2)$   should be viewed as the remaining portion of the  local  coordinates on $SO(n)$ as discussed above. 
   
Next, consider the mapping $\Phi \colon \R^{k+m} \to \R^m$, whose $j$-th component is given by 
\begin{align*}
\Phi_j(x,B)=F(x,B\xi_j). 
\end{align*}
The vectors  $\xi_j$ are precisely the vectors $\xi_1,...,\xi_m$ described above.
As $(\hat X,\hat Y)$ we take the point corresponding to $(x_1,...,x_n)= p$, $B= \mathrm{Id}$.

The differential of the mapping $\Phi$ at $(\hat X,\hat Y)$ is given by the following $m\times k$-matrix in which all partial derivatives are taken at the point $(X,Y)= (\hat X, \hat Y)$:  
\begin{align*}
d\Phi |_{\substack{B=\text{Id}\\x=p}} = \left( \begin{array}{c|c|c }
    \begin{matrix} &\\ \frac{\partial F(x,B\xi)}{\partial x} \\ &\end{matrix} &    \begin{matrix} &\\ \frac{\partial F(x,B\xi)}{\partial B_1}\\&  \end{matrix} &
      \begin{matrix} &\\ \frac{\partial F(x,B\xi)}{\partial B_2}\\  &\end{matrix}  
\end{array}\right). 
\end{align*}
By construction the last $m$ columns of the matrix form a nondegenerate $m\times m$ matrix, which is precisely the nondegenerate submatrix of matrix \eqref{1}.  Thus, all assumptions of the Implicit Function Theorem are satisfied and therefore there exists the 
smooth mapping  
\begin{align*}
B_2(x,B_1) 
\end{align*}
such that for $j=1,..,m$ holds
\begin{equation} \label{2}
\Phi(x,B_1,B_2(x,B_1))=\Phi(\hat X, \hat Y).
\end{equation}

Next, we construct a family $B_x\in SO(n)$ which depends smoothly on $x$ and which should be viewed as  a field of isomorphisms  $B(x)= B_x \colon T_{p}M \to T_{x}M$. In the local coordinates $B$ this family is given by  
\begin{align}\label{isomorphisms} B_x :=(B_1,B_2(x,B_1)).\end{align} 
Here $B_1$ is composed of the first ${\frac{n(n+1)}2- m}$ components of $\textrm{Id}$. By construction of $B_x$, we have 
\begin{align}
\label{InvBed}
F(x,B_x (\xi_i))= F(p, \xi_i)  \text{ for }   i=1,...,m.
\end{align} 
Our next goal is to  prove that (\ref{InvBed}) holds for all $\xi \in T_{p}M$. In order to do this, we  consider the following two  subsets of $SO(n)$: 
\begin{align*}
 U&=\{ u \in SO(n) ~ | ~ \forall \xi \in T_{p}M: ~ F(p,\xi)=F(p,u(\xi))\},\\
 U^{\prime}&=\{ u \in SO(n) ~ | ~ F(p,\xi_i)=F(p,u(\xi_i)) \text{ for }i=1,...,m\}.
\end{align*} 
Both subsets are compact,  $U$ is a Lie subgroup of $SO(n)$  and  we  have $U \subseteq  U^{\prime}$.  Let us now show that $U^{\prime} \setminus U$ is also compact. It is sufficient to show that $U^{\prime} \setminus U$ is closed, i.e., we need to show the  nonexistence of a sequence $u_1,...,u_i,...\in U^{\prime} \setminus U $ such that it converges to  an element of $U $.
 
We will use that the elements   $u\in U^{\prime}$ such that they are sufficiently close to  $\textrm{Id}\in SO(n)$ automatically lie in $U $.  Indeed, in a small neighborhood of  $\textrm{Id}$ 
both $U$ and $U^{\prime}$ are submanifolds of $SO(n)$ of the same dimension $\frac{n(n+1)}{2}- m$ and $U \subseteq  U^{\prime}$. 
  
Suppose a sequence $\{u_l\}_{l\in \N}\in U^{\prime}  \setminus U $  converges to $u\in U$. We consider the sequence $\{u^{-1}u_l\}_{l\in \N}$. It converges to $u^{-1}u =  \textrm{Id}$. Clearly, all elements of the sequence $\{u^{-1}u_l\}_{l\in \N}$ lie in $U^{\prime}$. Then, the elements of this sequence with sufficiently big indices  also lie in $U^\prime$. But this would imply that the corresponding  elements of the sequence $\{u_l\}_{l\in \N}$ lie  in $U$, which contradicts the assumption. Thus, $U^{\prime} \setminus U$ is compact.

\vspace{1ex} 
Now we can  show that for each $x\in M$  which is sufficiently close to $p$, the linear isomorphisms  $B_x$  constructed above  are  isomorphisms of the normed spaces $(T_{p}M, F_p)$ and $(T_{x}M, F_x)$.
Since   our  Finsler metric $F$ is monochromatic,  there exists    a linear isomorphism $A_x \colon T_{p}M \to T_xM$ such that
\begin{align*}
F(x,A_x(\xi)) = F(p,\xi) \text{ for all } \xi \in T_{p}M.
\end{align*}
Consider $A^{-1}_xB_x$ which lies in $U^{\prime}$ by  construction. In order  to show that (\ref{InvBed}) holds for $B_x$, it is sufficient  to show that $A^{-1}_xB_x \in U $. 
Let $\phi\colon SO(n) \to \R$ be the following function:
\begin{align*}
\phi(u) = \int_{K} |F(p,\xi) - F(p,u(\xi))| d\text{vol}_{g_F},
\end{align*}
where $K$ is the unit ball  and $\text{vol}_{g_F}$ the volume form of the Binet-Legendre metric $g_F$. The function  $\phi$ is continuous and nonnegative. 
Moreover,  $\phi(u) =0 $ if and only if $u \in U$. Then, because of 
 the compactness of $U^{\prime} \setminus U$,  there exists an $\epsilon > 0$ such that
\begin{align*}
\phi \big|_{U^\prime \setminus U} > \epsilon.
\end{align*}
Let us now consider $\phi(A^{-1}_xB_x)$:
\begin{align*}
\phi(A^{-1}_xB_x) & = \int_{K} |F(p,\xi) - F(p,A^{-1}_x B_x(\xi))| d\text{vol}_{g}\\
 &=\int_{K} |F(p,\xi) - F(p,B_x(\xi))| d\text{vol}_{g}\\
  &= \phi(B_x)
\end{align*} 
But $\phi(B_{p})=\phi(\text{Id})=0$, so that for $x$ sufficiently close to $p$ we have   $\phi(A^{-1}_xB_x)< \epsilon$, which implies that  $A^{-1}_xB_x \in U$ and hence $B_x \in U$.
Thus, locally one can find a field $B_x:T_pM\to T_xM$ 
of   isomorphisms  of the normed spaces $(T_pM, F_p)$ and  $(T_xM, F_x)$. 


\subsection{Construction of the associated connection }

Let us now show that our metric (still in a small neighborhood of the point $p$)  is  a generalized Berwald metric. Our proof  repeats, in  slightly different  notations, the proofs of Laugwitz \cite{Laugwitz65}  and Ichijyo \cite{Ichijyo76}.

Take a basis  $b_1 =\frac{\partial}{\partial x_1} ,...,b_n =\frac{\partial}{\partial x_n} $ on $T_pM$ and for each point $x$ of our small neighborhood 
consider the  basis $b_1(x)=B_xb_1,...,  b_n(x)=B_xb_n$. Let us now construct an  affine connection, possibly with torsion, such that its  parallel transport along any curve connecting $p$ and $x$  maps $ b_i(p) $ to $ b_i(x) $. Clearly, this condition on the connection  is equivalent to the condition 
\begin{align}
\label{Parallel}
\nabla   b_j(x) =0.
\end{align}
In coordinates,  (\ref{Parallel})  means that for all $i$ we have 
\begin{align*}
0 &=  \nabla_i b_k^j \\
&= \partial_i b(x)^j + \Gamma^j_{si} b(x)^s.
\end{align*}
By construction, in coordinates,  $b_1(x),...,b_n(x)$ are just   columns of the matrix $B(x)$. Next, consider   the matrices 
\begin{align*}
\Gamma_i &= \left(\begin{matrix}
\Gamma^1_{1i} & \dots & \Gamma^1_{ni} \\
\vdots & \ddots & \vdots\\
\Gamma^n_{1i} & \dots & \Gamma^n_{ni}
\end{matrix}\right).
\end{align*}
In this notation,  (\ref{Parallel}) reads 
\begin{align*}
\partial_i B(x) = -\Gamma_i B(x)
\end{align*}
and clearly has a  solution 
\begin{align*}
\Gamma_i = -\partial_i B(x) B(x)^{-1} . 
\end{align*}

This gives us  Christoffel symbols such that the parallel transport associated to this connection along any curve connecting $p$ and $x$ is given by $B_x$, and therefore is an isometry of the normed spaces $(T_pM, F_p)$ and $(T_xM, F_x)$.  We therefore proved the local version of  Theorem \ref{ThmMainResult}.

Finally we want to prove that there exists an  affine connection on the whole $M$ which preserves $F$. For each point $x\in U$,   consider a neighborhood $U(x)$ such that in this neighborhood there exists an affine  connection $\overset{x}{\nabla}$ on it which is an associated connection of $F$. Consider  a partition of unity $f_x$  subordinated  to  this  open cover, its existence is standard. Next, set $\nabla = \sum_x  f_x  \overset{x}{\nabla}$ (though the set of  points $x$ is infinite, at each sufficiently small neighborhood only finitely many terms in the sum are unequal to zero, guaranteed by the definition of the partition of unity; so the sum is well-defined). It is known and can easily be checked directly that this formula indeed defines an affine  connection; it is easy to check that the parallel transport in this connection preserves  our   Finlser metric $F$.


\section{Existence of generalized Berwald metrics on 2- and 3-dimensional closed manifolds} \label{dim3}

We start with dimension 2.  Clearly, the torus and the Klein bottle have non-Riemannian generalized Berwald metrics (induced by a Minkowski metric on the universal cover).
Let us show that the other closed 2-dimensional manifolds can not have non-Riemannian generalized Berwald metrics. In order to prove this, observe that  the group $SO(2)$ is one-dimensional and thereby each of its proper connected subgroup is discrete. Then, the local holonomy group of the associated affine connection of a non-Riemannian generalized Berwald metric is trivial, which implies,  by the Ambrose-Singer Theorem,  that the associated  affine connection is flat. Then, by \cite{Milnor}, the surface has Euler characteristic equal to zero.\\

\noindent
In higher dimension we can prove the following:
\begin{thm}
Every closed manifold with Euler characteristic zero admits a non-Riemannian generalized Berwald metric.
\end{thm}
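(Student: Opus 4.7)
The plan is to exhibit, on any closed manifold $M$ with $\chi(M)=0$, a Randers-type Finsler metric whose infinitesimal data is the same at every tangent space up to linear isomorphism, and then to invoke Theorem \ref{ThmMainResult}.

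First I would use the Poincar\'e--Hopf theorem to produce a nowhere-vanishing smooth vector field $V$ on $M$. Next, I would fix any auxiliary Riemannian metric $g_0$ and replace it by the conformally rescaled metric $g := g_0/g_0(V,V)$, with respect to which $V$ is a unit vector field. Letting $\beta$ denote the $g$-dual 1-form of $V$, so $\beta(X) = g(V,X)$, we have $\|\beta\|_g \equiv 1$. I would then define
\begin{align*}
F(x,\xi) := \sqrt{g(\xi,\xi)} + \tfrac{1}{2}\beta(\xi).
\end{align*}
Positivity on the slit tangent bundle is immediate from Cauchy--Schwarz: $|\beta(\xi)| \le \sqrt{g(\xi,\xi)}$, and the coefficient $\tfrac{1}{2}$ is strictly less than $1$. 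Strong convexity of $F$ is the standard Randers computation.

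The key step is monochromacy of $F$. At each point $x$, the norm $F_x$ is determined by the pair $(g_x, V(x))$ consisting of an inner product on $T_xM$ together with a $g_x$-unit vector. Since $O(n)$ acts transitively on the unit sphere of an $n$-dimensional Euclidean space, for any $x,y\in M$ there is a linear isomorphism $A \colon T_xM \to T_yM$ with $A^*g_y = g_x$ and $AV(x) = V(y)$. Such an $A$ satisfies
\begin{align*}
F_y(A\xi) = \sqrt{g_y(A\xi,A\xi)} + \tfrac{1}{2} g_y(V(y),A\xi) = \sqrt{g_x(\xi,\xi)} + \tfrac{1}{2} g_x(V(x),\xi) = F_x(\xi)
\end{align*}
for every $\xi\in T_xM$, so $F$ is monochromatic in the sense of Definition \ref{Defmono}. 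By Theorem \ref{ThmMainResult}, $F$ is then a generalized Berwald metric, and it is non-Riemannian because $\beta$ is nowhere zero.

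I do not foresee a serious obstacle: the content of the theorem really reduces to the observation that an \emph{inner product plus distinguished unit vector} Finsler norm realizes a uniform Minkowski model on any closed manifold of Euler characteristic zero, after which Theorem \ref{ThmMainResult} does all the work. The only minor technical verifications are choosing the coefficient of $\beta$ strictly less than $1$ to ensure $F$ is a genuine Finsler metric, and performing the conformal rescaling to make $V$ have unit $g$-norm everywhere.
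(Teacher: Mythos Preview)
Your proposal is correct and follows essentially the same approach as the paper: use $\chi(M)=0$ to obtain a nowhere-vanishing vector field, build a Randers-type metric from a Riemannian metric together with this vector field (normalized to have constant norm), observe that the resulting norms are all linearly isometric, and invoke Theorem~\ref{ThmMainResult}. The only cosmetic difference is that the paper describes the Randers metric via $V$-translated unit $g$-balls with $\|V\|_g=\tfrac12$, whereas you use the explicit $\alpha+\tfrac12\beta$ form with $\|V\|_g=1$; your version also spells out the monochromacy verification and the non-Riemannian check more explicitly than the paper does.
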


\begin{proof}
It is known that on manifolds with Euler characteristic zero there exists a non-vanishing vector field $V$. Take now a Riemannian metric $g$ on $M$ and normalize $V$ to the length $\frac{1}{2}$ with respect to $g$.
Consider now the Randers metric $F_R$ on $T_xM$ generated by $g$ and $V$, i.e., the unit balls of $F_R$ are the $V$-translations of the unit balls of $g$. Those convex balls in every tangent space $T_xM$ are isomorphic to each other. By our Theorem \ref{ThmMainResult} we obtain a generalized Berwald metric. 
\end{proof}

\begin{cor}
In dimension 3 every closed manifold has Euler characteristic zero and thus admits a non-Riemannian generalized Berwald metric.
\end{cor}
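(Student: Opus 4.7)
The plan is to reduce the corollary to the preceding theorem by verifying its single hypothesis, namely that every closed $3$-manifold $M$ has vanishing Euler characteristic. Once $\chi(M) = 0$ is established, the construction from the proof of the theorem (a Randers metric $F_R$ built from an auxiliary Riemannian metric $g$ and a nowhere-vanishing vector field $V$ of $g$-length $\tfrac{1}{2}$) yields the desired non-Riemannian generalized Berwald metric directly, with no further work.

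To handle the orientable case I would invoke Poincaré duality: for a closed orientable $3$-manifold one has $b_i(M) = b_{3-i}(M)$ for $i = 0,1,2,3$, so
\begin{align*}
\chi(M) = b_0 - b_1 + b_2 - b_3 = (b_0 - b_3) - (b_1 - b_2) = 0.
\end{align*}
For the non-orientable case I would pass to the orientation double cover $\pi \colon \tilde M \to M$. Since the Euler characteristic is multiplicative under finite coverings, $\chi(\tilde M) = 2 \chi(M)$; but $\tilde M$ is a closed orientable $3$-manifold, so by the previous paragraph $\chi(\tilde M) = 0$, and hence $\chi(M) = 0$ as well.

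With $\chi(M) = 0$ in hand, the preceding theorem directly produces a non-Riemannian generalized Berwald metric on $M$, completing the proof. There is essentially no obstacle here: the topological input is entirely classical, and the Finsler-geometric content has been absorbed into the theorem. The only minor point worth being careful about is the non-orientable case, where one must remember to appeal to the double cover rather than trying to apply Poincaré duality with $\mathbb{Z}$ coefficients directly; using multiplicativity of $\chi$ under finite covers sidesteps this cleanly.
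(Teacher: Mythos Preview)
Your proposal is correct and matches the paper's approach: the paper states the corollary without proof, treating it as an immediate consequence of the preceding theorem together with the classical fact that closed $3$-manifolds have vanishing Euler characteristic. Your argument supplies the standard Poincar\'e-duality-plus-double-cover justification for that fact, which is more detail than the paper gives but entirely in line with its intent.
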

\noindent
It is an interesting problem to understand in all dimensions what manifolds can admit non-Riemannian generalized Berwald metrics.


  Institute of Mathematics, FSU Jena, 07737 Jena Germany,\\  nina.bartelmess@uni-jena.de,  vladimir.matveev@uni-jena.de 
\end{document}